\documentclass{amsart}
\usepackage{amssymb,amsmath,latexsym}
\usepackage{amsthm}
\usepackage{fontenc}
\usepackage{amssymb}
% Page length commands go here in the preamble
\DeclareMathOperator{\erf}{erf}
\DeclareMathOperator{\erfc}{erfc}
\DeclareMathOperator{\erfi}{erfi}
\DeclareMathOperator{\Ei}{Ei}

\numberwithin{equation}{section}

\newtheorem{theorem}{Theorem}[section]

 % 1.5 denotes double spacing. Changing it will change the spacing

\setlength{\parindent}{0in} 
\begin{document}
\author{Alexander E Patkowski}
\title{On special Riemann xi function formulae of Hardy involving the digamma function}

\maketitle
\begin{abstract}We consider some properties of integrals considered by Hardy and Koshliakov, that have have connections to the digamma function. We establish a new general integral formula that provides a connection to the polygamma function. We also obtain lower and upper bounds for Hardy's integral through properties of the digamma function. \end{abstract}

% AMS keywords (used in AMS journals)
\keywords{\it Keywords: \rm Fourier Integrals; Riemann xi function; Digamma function.}

% AMS subject classifications (used in AMS journals)
\subjclass{ \it 2010 Mathematics Subject Classification 11M06, 33C15.}

\section{Introduction and Main formulas}
In a paper written by the well-known G. H. Hardy [10], an interesting integral formula is presented (corrected in [3])
\begin{equation} \int_{0}^{\infty}\frac{\Xi(t/2)}{1+t^2}\frac{\cos(xt)}{\cosh(\pi t/2)}dt=\frac{1}{4}e^{-x}\left(2x+\frac{1}{2}\gamma+\frac{1}{2}\log(\pi)+\log(2)\right)+\frac{1}{2}e^{x}\int_{0}^{\infty}\psi(t+1)e^{-\pi t^2e^{4x}}dt,\end{equation}
where $\psi(x):=\frac{\partial}{\partial x}\log(\Gamma(x)),$ $\Gamma(x)$ being the gamma function [1, pg.1], and $\Xi(t):=\xi(\frac{1}{2}+it),$ where [11, 15]
$$\xi(s):=\frac{1}{2}s(s-1)\pi^{-\frac{s}{2}}\Gamma(\frac{s}{2})\zeta(s).$$ Here we have used the standard notation for the Riemann zeta function $\zeta(s):=\sum_{n\ge1}n^{-s},$ for $\Re(s)>1.$ Koshliakov [12, eq.(14), eq.(20)] (or [3, eq.(1.15)]) produced this formula as well, but in a slightly different form,
\begin{equation} 2\int_{0}^{\infty}\frac{\Xi(t/2)}{1+t^2}\frac{\cos(xt)}{\cosh(\pi t/2)}dt=e^{x}\int_{0}^{\infty}(\psi(t+1)-\log(t))e^{-\pi t^2e^{4x}}dt.\end{equation}

Here the trick, which has been exploited in the many studies [3, 4, 5, 6, 7, 12, 14], is to re-write the left side of (1.1) as an inverse Mellin transform by utilizing the classical functional equation $\xi(s)=\xi(1-s)$ [15]. Titchmarsh has used simpler but similar integrals than the left side of (1.1) to obtain Hardy's result that $\Xi(t)$ has infinitely many real zeros [15]. See Csordas [2] for some more recent work in this direction. A recent paper by Dixit offers a beautiful generalization of formula (1.1), which involves a confluent hypergeometric function [3, Theorem 1.3]. Other refinements can be found in [4, 5]. \par The purpose of this paper is to offer some further results concerning (1.1) that appear to have been overlooked. A new generalization is offered as well that takes a different route than the ones offered by [3, 4, 5], considering what is essentially the polygamma function [9, pg.904, eq. NH 37(1)]. In the following section, we offer some new inequalities that we noticed upon examining the integral given in (1.1). \par Recall the Mellin transform of a suitable function $f(t)$ is given by [11, pg.90, eq.(4.105)]
$$\int_{0}^{\infty}f(t)t^{s-1}dt=F(s),$$ and its inverse,
$$f(t)=\frac{1}{2\pi i}\int_{c-i\infty}^{c+i\infty}F(s)t^{-s}ds,$$
provided $c$ is a real number chosen where $F(s)$ is analytic. We will be applying Parseval's formula for Mellin transforms throughout the proofs of our theorems [15, pg.34, eq.(2.15.10)]
$$\int_{0}^{\infty}f(t)g(t)dt=\frac{1}{2\pi i}\int_{c-i\infty}^{c+i\infty}F(s)G(1-s)ds.$$ Here the needed condition for validity is to have $c$ chosen as a real number restricted to the region where $F(s)G(1-s)$ is analytic.

\begin{theorem} For $m\in\mathbb{N}_{0},$ we have that 
$$(-1)^{m}e^{x}\int_{0}^{\infty}t^m \bar{\psi}_{m}(t)e^{-\pi t^2e^{4x}}dt$$
$$=\frac{1}{2}\int_{0}^{\infty}\frac{\Xi(t)}{\cosh(\pi t)(t^2+\frac{1}{4})}\left(\frac{\Gamma(\frac{1}{2}-it+m)}{\Gamma(\frac{1}{2}-it)}e^{-2xit}+\frac{\Gamma(\frac{1}{2}+it+m)}{\Gamma(\frac{1}{2}+it)}e^{2ixt}\right)dt,$$
$$\bar{\psi}_{m}(t):=\frac{\partial^m }{\partial t^m}(\psi(t+1)-\log(t)).$$

\end{theorem}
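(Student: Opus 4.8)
The plan is to show that each side of the stated identity equals the single Mellin--Barnes integral
\[
-\frac{e^{-x}}{2}\cdot\frac{1}{2\pi i}\int_{(1/2)}\pi^{-s/2}\Gamma(s/2)\Gamma(1-s)\Gamma(s+m)\zeta(s)\,e^{2xs}\,ds,
\]
the contour being the critical line $\Re(s)=\tfrac12$, with the functional equation $\xi(s)=\xi(1-s)$ serving as the bridge between the two reductions.

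For the right-hand side I would put $s=\tfrac12+it$ and use two consequences of the definition of $\xi$ and the reflection formula: since $s(s-1)=-(t^2+\tfrac14)$ one has $\Xi(t)/(t^2+\tfrac14)=-\tfrac12\pi^{-s/2}\Gamma(s/2)\zeta(s)$, and since $\Gamma(s)\Gamma(1-s)=\pi/\sin(\pi s)=\pi/\cosh(\pi t)$ one has $1/\cosh(\pi t)=\pi^{-1}\Gamma(s)\Gamma(1-s)$. Substituting both collapses the kernel $\Xi(t)/((t^2+\tfrac14)\cosh(\pi t))$ into $-\tfrac{1}{2\pi}\pi^{-s/2}\Gamma(s/2)\Gamma(s)\Gamma(1-s)\zeta(s)$, while the two Pochhammer ratios are $\Gamma(1-s+m)/\Gamma(1-s)$ and $\Gamma(s+m)/\Gamma(s)$. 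Because the functional equation renders $\pi^{-s/2}\Gamma(s/2)\zeta(s)$ — and hence, with the symmetry of $\Gamma(s)\Gamma(1-s)$, the whole kernel — invariant under $s\mapsto1-s$ (that is, $t\mapsto-t$), the map $t\mapsto-t$ interchanges the two bracketed terms; so the right-hand side equals $\tfrac12\int_{-\infty}^{\infty}\frac{\Xi(t)}{(t^2+\frac14)\cosh(\pi t)}\frac{\Gamma(s+m)}{\Gamma(s)}e^{2ixt}\,dt$. The change of variable $s=\tfrac12+it$, $dt=ds/i$, together with $e^{2ixt}=e^{-x}e^{2xs}$, then yields the displayed integral.

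For the left-hand side I would compute its Mellin transform in the variable $y=e^{2x}$. The Gaussian moment $\int_0^\infty e^{-\pi t^2y^2}y^{-s}\,dy=\tfrac12\pi^{(s-1)/2}\Gamma(\tfrac{1-s}{2})\,t^{s-1}$ reduces everything to $\int_0^\infty\bar{\psi}_{m}(t)\,t^{s+m-1}\,dt$; integrating this by parts $m$ times extracts the factor $(-1)^m\Gamma(s+m)/\Gamma(s)$ and leaves the base transform $\mathcal G(s):=\int_0^\infty(\psi(t+1)-\log t)\,t^{s-1}\,dt$, all boundary terms vanishing on the strip $0<\Re(s)<1$ thanks to the growth of $\partial_t^{k}(\psi(t+1)-\log t)$ (no worse than $t^{-k}$, or a logarithm when $k=0$, as $t\to0^+$, and like $t^{-(k+1)}$ as $t\to\infty$). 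The decisive digamma observation is the evaluation of $\mathcal G$: starting from $\psi(t+1)-\log t=\int_0^\infty e^{-tu}\big(\tfrac1u-\tfrac1{e^u-1}\big)\,du$ and the analytically continued formula $\int_0^\infty\big(\tfrac1{e^u-1}-\tfrac1u\big)u^{w-1}\,du=\Gamma(w)\zeta(w)$ valid for $0<\Re(w)<1$, one obtains $\mathcal G(s)=-\Gamma(s)\Gamma(1-s)\zeta(1-s)$.

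Putting the pieces together, the Mellin transform of the left side carries the factor $\pi^{(s-1)/2}\Gamma(\tfrac{1-s}{2})\Gamma(s+m)\Gamma(1-s)\zeta(1-s)$; after inversion I would invoke the functional equation in the symmetric form $\pi^{(s-1)/2}\Gamma(\tfrac{1-s}{2})\zeta(1-s)=\pi^{-s/2}\Gamma(s/2)\zeta(s)$, which turns this integrand into precisely the one found for the right side, finishing the identification. I expect the main difficulty to be bookkeeping rather than conceptual: keeping every sign and constant correct through the symmetrization and the $m$-fold integration by parts, and rigorously justifying the interchanges of integration together with the uniform vanishing of the boundary terms via the asymptotics of the polygamma-type function $\bar{\psi}_{m}$. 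Absolute convergence of all the contour integrals, and hence the legitimacy of Mellin inversion, is guaranteed by the exponential decay of the $\Gamma$-factors along $\Re(s)=\tfrac12$. A shorter but less self-contained alternative is to start from $(1.1)$ in Koshliakov's form and apply the differential operator $\prod_{k=0}^{m-1}\big(k+\tfrac12+\tfrac12\,\partial_x\big)$, which reproduces both Pochhammer symbols on the right and, by an induction using a single integration by parts in $t$ at each step, reproduces $t^m\bar{\psi}_{m}$ on the left.
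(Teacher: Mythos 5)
Your proposal is correct, and it runs on the same underlying engine as the paper's proof: the Mellin transform of $\psi(t+1)-\log t$ paired with the Gaussian, the functional equation $\xi(s)=\xi(1-s)$, and evaluation on the critical line $\Re(s)=\tfrac12$. But your implementation differs in three genuine ways. First, the direction of the argument: the paper goes one way, applying Parseval's theorem for Mellin transforms to $f(t)=e^{-\pi t^2}$, $g(t)=t^m\bar{\psi}_m(t)$ and then massaging the resulting contour integral into the right-hand side, whereas you have both sides collapse independently to a common Mellin--Barnes integral; this costs you a separate (easy) reduction of the right-hand side but makes the role of the functional equation completely transparent. Second, the paper produces the factor $\Gamma(s+m)/\Gamma(s)$ by differentiating Titchmarsh's representation (1.3) $m$ times under the integral sign, while you produce $(-1)^m\Gamma(s+m)/\Gamma(s)$ by $m$-fold integration by parts inside $\int_0^\infty\bar{\psi}_m(t)t^{s+m-1}dt$; these are dual operations, and yours forces explicit verification of the boundary terms via the asymptotics of $\bar{\psi}_k$ at $0$ and $\infty$, a point the paper never addresses. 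Third, where the paper simply cites Titchmarsh [15, eq.\ (2.15.7)], you rederive that transform from the Binet-type representation of $\psi(t+1)-\log t$ and Riemann's formula $\int_0^\infty\bigl(\frac{1}{e^u-1}-\frac1u\bigr)u^{w-1}du=\Gamma(w)\zeta(w)$; note that your $\mathcal{G}(s)=-\Gamma(s)\Gamma(1-s)\zeta(1-s)$ is exactly the paper's kernel $-\pi\zeta(1-s)/\sin(\pi s)$ by reflection, so this is the same key lemma made self-contained, and your signs agree with the paper's (1.4). The net trade: the paper's proof is shorter because it leans on citations and a ready-made Parseval identity; yours is longer but self-contained and more careful about absolute convergence, boundary terms, and the legitimacy of Mellin inversion, which are precisely the details the paper glosses over. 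Your closing alternative (applying $\prod_{k=0}^{m-1}(k+\tfrac12+\tfrac12\partial_x)$ to Koshliakov's form of (1.1)) also checks out --- at $m=1$ one integration by parts indeed converts it to $-e^x\int_0^\infty t\bar{\psi}_1(t)e^{-\pi t^2e^{4x}}dt$ --- and is arguably the closest in spirit to how the theorem was likely found.
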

\begin{proof} 
This is a special application of Parseval's theorem for Mellin transforms with one function chosen as $f(t)=e^{-\pi t^2},$ and the other as $g(t)=t^m \bar{\psi}_{m}(t).$ From Titchmarsh [15, eq.(2.15.7)] we have for $0<c<1,$ 
\begin{equation}\psi(x+1)-\log(x)=-\frac{1}{2\pi i}\int_{c-i\infty}^{c+i\infty}\frac{\pi\zeta(1-s)}{\sin(\pi s)}x^{-s}ds.\end{equation}
Differentiating (1.3) $m$ times, gives us
\begin{equation}\bar{\psi}_{m}(x)=(-1)^{m-1}\frac{1}{2\pi i}\int_{c-i\infty}^{c+i\infty}\frac{\pi\zeta(1-s)}{\sin(\pi s)}\frac{\Gamma(s+m)}{\Gamma(s)}x^{-s-m}ds.\end{equation}
This may also be obtained from the integral formula
$$\bar{\psi}_{m}(x)=(-1)^m\int_{0}^{\infty}\left(\frac{1}{t}-\frac{1}{e^t-1}\right)t^me^{-xt}dt,$$ valid for $\Re(s)>-m.$
Equation (1.4) gives us the Mellin transform for our $g(t).$ Now proceeding with our choices for $f(t)$ and $g(t)$ in the beginning of the proof, and noting $\sin((\frac{1}{2}+it)\pi)=\cosh(\pi t),$ the functional equation $\xi(s)=\xi(1-s)$ with (1.4), the Mellin transform of $f(t),$ $\pi^{-s/2}x^{-s}\Gamma(s/2)/2,$ we have
$$(-1)^{m}e^{x}\int_{0}^{\infty}t^m \bar{\psi}_{m}(t)e^{-\pi t^2e^{4x}}dt=\frac{1}{2\pi i}\int_{c'-i\infty}^{c'+i\infty}\frac{\pi\xi(s)}{\sin(\pi s)}\frac{\Gamma(1-s+m)}{s(s-1)\Gamma(1-s)}x^{-s}ds,$$ for $0<c'<1.$
Put $c'=\frac{1}{2}$ and we find the right side is equal to
$$\frac{1}{2\sqrt{x}}\int_{-\infty}^{\infty}\frac{\Xi(t)}{(t^2+\frac{1}{4})\cosh{\pi t}}\frac{\Gamma(\frac{1}{2}-it+m)}{\Gamma(\frac{1}{2}-it)}x^{-it}dt.$$
Splitting this bilateral integral up into two integrals and replacing $x$ by $e^{2x}$ leads to the result. \end{proof}

Koshliakov [12, eq.(36), eq.(40)] gives a similar integral to (1.1) but squaring the integrand $\Xi(t)(t^2+\frac{1}{4})^{-1}.$ 
$$\int_{0}^{\infty}\frac{\Xi^2(t/2)}{(1+t^2)^2}\frac{\cos(xt)}{\cosh(\pi t/2)}dt=e^{x/2}\int_{0}^{\infty}K_{0}(2\pi e^{x}t)\Lambda(t)dt,$$
where 
$$\Lambda(t)=\zeta(2)+\gamma^2-2\gamma_1+2\gamma\log(t)+\frac{1}{2}\log^2(t)+\sum_{n\ge1}d(n)\left(\frac{1}{t+n}-\frac{1}{n}\right).$$
Here $d(n)$ denotes the number of divisors of $n,$ and the Stieltjes constant is 
$$\gamma_1=\lim_{N\rightarrow\infty}\left(\sum_{1\le n\le N}\frac{\log(n)}{n}-\frac{\log(N)^2}{2}\right).$$
Moll and Dixit state his result in [6, Theorem 4.4], and offer further interesting generalizations. However, we noticed a slightly different form of this integral based on the type of proof we used in the previous theorem. 

\begin{theorem} Let $K_s(t)$ denote the modified Bessel function. For $x>0,$
\begin{equation}\int_{0}^{\infty} (\psi(t+1)-\log(t))\left(\frac{2\pi e^{x/2}}{t}-e^{-x/2}4\sum_{n\ge1}K_{0}(2\pi nte^{-x})\right)dt=\int_{0}^{\infty}\frac{\Xi^2(t)}{(t^2+\frac{1}{4})^2}\frac{\cos(xt)}{\cosh(\pi t)}dt.\end{equation}
\end{theorem}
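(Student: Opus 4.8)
The plan is to again use Parseval's theorem for Mellin transforms, this time with two functions both built from the factor $\Xi(t)(t^2+\tfrac14)^{-1}$ on the right, so that the "squaring" appears naturally as $G(s)G(1-s)$. Concretely, I would take as my two functions $f(t)=\psi(t+1)-\log(t)$ and a second function $g(t)$ whose Mellin transform supplies the factor $\pi\xi(s)/(\sin(\pi s))$ up to the gamma factors, exactly as in the proof of Theorem 1.1 with $m=0$. The key identity driving everything is (1.3), which tells us that the Mellin transform of $\psi(t+1)-\log(t)$ is $-\pi\zeta(1-s)/\sin(\pi s)$ on the strip $0<\Re(s)<1$. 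Thus when I form the Parseval product $\int_0^\infty f(t)g(t)\,dt = \frac{1}{2\pi i}\int_{(c)}F(s)G(1-s)\,ds$ and choose $g$ so that $G(s)$ already contains one copy of $\xi(s)/\cosh$, the integrand on the critical line $\Re(s)=\tfrac12$ will collapse to $\Xi^2(t)\big/\big[(t^2+\tfrac14)^2\cosh(\pi t)\big]$ times $\cos(xt)$ after symmetrizing, reproducing the right-hand side of (2.1).

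First I would identify the function $g(t)$ on the left whose Mellin transform is the squared kernel. The expression $\frac{2\pi e^{x/2}}{t}-4e^{-x/2}\sum_{n\ge1}K_0(2\pi n t e^{-x})$ is the crucial ingredient: I expect this to be exactly the inverse Mellin transform (in a variable paired with $x$) of a factor of the shape $\pi^{-s}\Gamma(s/2)^2\zeta(s)\,(\text{elementary factors})$, i.e. the self-dual "Koshliakov kernel." The Bessel-$K_0$ series is the standard signature of $\zeta(s)\Gamma(s/2)^2\pi^{-s}$ under Mellin inversion, since $K_0$ has Mellin transform $2^{s-2}\Gamma(s/2)^2$ and the sum over $n$ produces the $\zeta(s)$; the isolated $2\pi e^{x/2}/t$ term accounts for the pole contributions (the residues at $s=0,1$) that must be subtracted to keep the integral convergent. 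So Step one is to verify, by the functional equation $\xi(s)=\xi(1-s)$ and the duplication/reflection formulas for $\Gamma$, that
$$
\int_0^\infty\left(\frac{2\pi e^{x/2}}{t}-4e^{-x/2}\sum_{n\ge1}K_0(2\pi n t e^{-x})\right)t^{s-1}\,dt
$$
equals the intended factor $G(1-s)$ carrying one copy of $\Xi/[(t^2+\tfrac14)\cosh]$ after moving to the line $\Re(s)=\tfrac12$.

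Next I would assemble the two transforms. Writing $F(s)=-\pi\zeta(1-s)/\sin(\pi s)$ from (1.3) and $G(1-s)$ as the transform just computed, Parseval's theorem gives the left side of (2.1) as a single contour integral on $0<c<1$; pushing $c$ to $\tfrac12$ and using $\xi(s)=\xi(1-s)$ together with $\sin((\tfrac12+it)\pi)=\cosh(\pi t)$ converts the product of the two $\zeta$-$\xi$ factors into $\Xi^2(t)$, the two $(s-\tfrac12)(s+\tfrac12)$-type factors from the $\xi$'s into $(t^2+\tfrac14)^2$, and the two $\sin/\cosh$ factors into $\cosh(\pi t)$ (one cancels against a gamma factor). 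Finally, folding the bilateral integral over $t\in(-\infty,\infty)$ onto $(0,\infty)$ replaces $x^{-it}=e^{-ixt}$ by $\cos(xt)$, yielding the right side of (2.1).

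The main obstacle I anticipate is the first step: correctly identifying the Bessel series and the compensating $2\pi e^{x/2}/t$ term as the inverse Mellin transform of the squared Koshliakov kernel, and justifying the interchange of summation and integration together with the shift of contour across the poles at $s=0$ and $s=1$. Getting the constants and the powers of $2\pi$ and of $e^{\pm x/2}$ exactly right — so that the residues at those poles precisely produce the subtracted $1/t$ term and the convergence of the resulting integral is maintained — is the delicate part; the rest is the same symmetrization-and-fold mechanism already used for Theorem 1.1.
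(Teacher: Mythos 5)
Your proposal is correct and matches the paper's proof essentially step for step: the paper likewise computes the Mellin transform of the $K_{0}$-series termwise ($K_{0}$ supplying $\Gamma^{2}(s/2)$ and the sum over $n$ supplying $\zeta(s)$ for $\Re(s)>1$), shifts the contour into the strip $0<\Re(s)<1$ so that the residue at the simple pole $s=1$ produces the compensating $1/t$ term, and then combines this with (1.3) via Parseval's theorem and the functional equation $\xi(s)=\xi(1-s)$ on the line $\Re(s)=\tfrac{1}{2}$, folding to obtain $\cos(xt)$. The only small correction is that just the pole at $s=1$ enters (not $s=0$, since the contour never crosses it), exactly as in the paper's equation (1.6).
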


\begin{proof} First, since [9, pg.920, eq. WA 231, 245(9)] $K_{0}(t)=O(t^{-1/2}e^{-t}),$ for $t\rightarrow\infty$ in $|\arg(t)|<3\pi/2$ where as usual $K_s(t)$ is the modified Bessel function [9, pg.928], it follows that the 
series on the right side of the formula (by [9, pg. 676, eq. EH II 51(27)]) $d>0,$
$$\sum_{n\ge1}\frac{1}{2\pi i}\int_{d-i\infty}^{d+i\infty}\Gamma^2(\frac{s}{2})(\frac{x}{2n})^{-s}ds=4\sum_{n\ge1}K_{0}(n\pi x),$$
converges absolutely. Summing through, we find for $d'>1,$
$$\frac{1}{2\pi i}\int_{d'-i\infty}^{d'+i\infty}\Gamma^2(\frac{s}{2})(\frac{x}{2})^{-s}\zeta(s)ds=4\sum_{n\ge1}K_{0}(n\pi x),$$
which has a simple pole at $s=1.$ Note that $\lim_{s\rightarrow1}(s-1)\Gamma^2(\frac{s}{2})(\frac{x}{2})^{-s}\zeta(s)=\frac{2\pi}{x},$ since $\Gamma(\frac{1}{2})=\sqrt{\pi}$ and $\lim_{s\rightarrow1}(s-1)\zeta(s)=1.$ If we calculate this residue and move the line of integration to $\Re(s)=b,$ $0<b<1,$ we find
\begin{equation}\frac{1}{2\pi i}\int_{b-i\infty}^{b+i\infty}\Gamma^2(\frac{s}{2})(\frac{x}{2})^{-s}\zeta(s)ds+\frac{2\pi}{x}=4\sum_{n\ge1}K_{0}(n\pi x).\end{equation}
Now noting (1.3) and (1.6) in conjunction with Parseval's theorem for Mellin transforms, we obtain the result after applying the functional equation for $\xi(s).$ 
\end{proof}
Other associated integral formulae related to Fourier cosine transforms may be obtained from using known evaluations.
\begin{theorem} We have for $\frac{\pi}{2}>|\Im(\beta')|,$
\begin{equation}\int_{0}^{\infty}\frac{\cosh(t)}{\cosh(2t)+\cosh(2\beta')}\left(e^{t/2}-2e^{-t/2}\sum_{n\ge1}e^{-\pi n^2e^{-2t}}\right)dt\end{equation}
$$=\frac{\pi e^{\beta'}}{4\cosh(\beta')}\int_{0}^{\infty}(\psi(t+1)-\log(t))e^{-\pi t^2e^{4\beta'}}dt.$$
\end{theorem}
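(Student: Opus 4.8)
The plan is to read the right-hand side as a constant multiple of Hardy's integral and to produce the left-hand side as a Parseval pairing of two Fourier cosine transforms, one carrying the theta factor and one carrying the hyperbolic kernel. First I would clear the right-hand side: Koshliakov's form (1.2) of the right side of (1.1), together with the elementary Gaussian moment $\int_{0}^{\infty}\log(t)\,e^{-\pi t^2e^{4\beta'}}dt$ that converts the explicit term of (1.1), gives
$$\frac{\pi e^{\beta'}}{4\cosh(\beta')}\int_{0}^{\infty}(\psi(x+1)-\log(x))e^{-\pi x^2e^{4\beta'}}dx=\frac{\pi}{2\cosh(\beta')}\int_{0}^{\infty}\frac{\Xi(t/2)}{1+t^2}\frac{\cos(\beta' t)}{\cosh(\pi t/2)}dt,$$
so it suffices to show that the left-hand side equals $\frac{\pi}{2\cosh(\beta')}$ times Hardy's integral at $\beta'$.

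Next I would treat the inner function $g(t):=e^{t/2}-2e^{-t/2}\sum_{n\ge1}e^{-\pi n^2e^{-2t}}$. Applying the Jacobi theta transformation to $\sum_{n\ge1}e^{-\pi n^2e^{-2t}}$ rewrites $g$ as $e^{-t/2}-2e^{t/2}\sum_{n\ge1}e^{-\pi n^2e^{2t}}$; comparing the two forms shows that $g$ is even and decays like $e^{-|t|/2}$, so its Fourier cosine transform exists. I would then compute $\int_{0}^{\infty}g(t)\cos(ut)\,dt$ by the same Mellin device used for (1.3)--(1.4): the substitution $v=e^{2t}$ turns the theta term into an \emph{incomplete} Mellin transform of $\sum_{n\ge1}e^{-\pi n^2v}$, which I would complete by means of the theta transformation and then collapse using the functional equation $\xi(s)=\xi(1-s)$, exactly the trick highlighted in the introduction. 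This yields a Riemann-$\Xi$ cosine pair, namely $\int_{0}^{\infty}g(t)\cos(ut)\,dt=\frac{\Xi(u)}{u^2+\tfrac14}$.

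Finally I would supply the kernel transform, which is the ``known evaluation'' the section refers to. Writing $\cosh(2t)+\cosh(2\beta')=2\cosh(t+\beta')\cosh(t-\beta')$ and integrating $\frac{\cosh(t)}{\cosh(2t)+\cosh(2\beta')}e^{iut}$ around the upper half-plane, the simple poles at $t=\pm\beta'+i\pi(k+\tfrac12)$ --- which stay off the real axis precisely because $\frac{\pi}{2}>|\Im(\beta')|$ --- contribute a geometric series summing to
$$\int_{0}^{\infty}\frac{\cosh(t)}{\cosh(2t)+\cosh(2\beta')}\cos(ut)\,dt=\frac{\pi\cos(\beta' u)}{4\cosh(\beta')\cosh(\pi u/2)};$$
a convenient check is $u=0$, where $w=\sinh(t)$ reduces the integral to $\frac{\pi}{4\cosh(\beta')}$. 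Parseval's theorem for Fourier cosine transforms then presents the left-hand side as $\frac{2}{\pi}\int_{0}^{\infty}$ of the product of the two transforms, that is, as $\frac{1}{2\cosh(\beta')}\int_{0}^{\infty}\frac{\Xi(u)\cos(\beta' u)}{(u^2+\tfrac14)\cosh(\pi u/2)}du$.

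The hard part will be the identification of the $g$-transform with the $\Xi$-kernel and, above all, the reconciliation of the spectral scaling once this is matched against $\frac{\pi}{2\cosh(\beta')}$ times Hardy's integral: one must carefully track the change of variable relating the $\Xi(u)/(u^2+\tfrac14)$ produced by $g$ to the $\Xi(t/2)/(1+t^2)$ of (1.1), and keep all constants. Rigour also demands justifying the contour shift and the interchange of summation and integration in the $g$-transform; as in Theorem 2 this is licensed by the exponential decay of the theta tails, which renders the relevant series and integrals absolutely convergent and the use of Parseval legitimate.
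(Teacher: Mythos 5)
Your strategy is exactly the paper's strategy: compute the Fourier cosine transform of the theta factor $g(t)=e^{t/2}-2e^{-t/2}\sum_{n\ge1}e^{-\pi n^2e^{-2t}}$ via Titchmarsh's formula, compute the cosine transform of the hyperbolic kernel (the paper cites Gradshteyn--Ryzhik with $\alpha=2$, $\beta=2\beta'$; your contour computation reproduces that entry), and pair the two with Parseval. All three of your ingredients are correct: $\int_0^\infty g(t)\cos(ut)\,dt=\Xi(u)/(u^2+\tfrac14)$, $\int_0^\infty\frac{\cosh t}{\cosh 2t+\cosh 2\beta'}\cos(ut)\,dt=\frac{\pi\cos(\beta'u)}{4\cosh(\beta')\cosh(\pi u/2)}$ for $|\Im\beta'|<\tfrac\pi2$, and the factor $\tfrac2\pi$ in Parseval; so is your rewriting of the right-hand side, via Koshliakov's form (1.2), as $\frac{\pi}{2\cosh(\beta')}$ times Hardy's integral at $x=\beta'$.

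The gap is precisely the step you postpone as ``the hard part'': it is not a matter of tracking constants, it cannot be done. Parseval gives the left side of (1.7) as $\frac{1}{2\cosh(\beta')}\int_0^\infty\frac{\Xi(u)\cos(\beta'u)}{(u^2+\frac14)\cosh(\pi u/2)}\,du$, whereas $\frac{\pi}{2\cosh(\beta')}$ times Hardy's integral becomes, after the substitution $t=2u$, $\frac{\pi}{4\cosh(\beta')}\int_0^\infty\frac{\Xi(u)\cos(2\beta'u)}{(u^2+\frac14)\cosh(\pi u)}\,du$: both the hyperbolic weight ($\cosh(\pi u/2)$ versus $\cosh(\pi u)$) and the frequency ($\beta'$ versus $2\beta'$) disagree. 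If these two functions of $\beta'$ were equal on the strip, Fourier uniqueness would force the pointwise identity $\frac{\Xi(u)}{u^2+\frac14}=\frac{\pi\,\Xi(u/2)}{u^2+1}$, which already fails at $u=0$ because $\Xi(0)\neq0$ and $4\neq\pi$. So no change of variables closes your argument. You should know this defect is inherited from the paper itself: its proof makes the same silent identification, and identity (1.7) as printed appears to be false --- at $\beta'=0$ the left side is at most $\frac12\int_0^\infty e^{-t/2}\,dt/\cosh t\approx0.488$ (numerically it is $\approx0.482$), while the right side is $\frac\pi4\int_0^\infty(\psi(x+1)-\log x)e^{-\pi x^2}dx\approx0.540$. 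What your method (and the paper's) actually proves is the statement obtained by taking $\alpha=1$ instead of $\alpha=2$ in the kernel evaluation: the kernel $\frac{\cosh(t/2)}{\cosh(t)+\cosh(2\beta')}$ has cosine transform $\frac{\pi\cos(2\beta'u)}{2\cosh(\beta')\cosh(\pi u)}$, and Parseval then yields $\int_0^\infty\frac{\cosh(t/2)}{\cosh(t)+\cosh(2\beta')}\,g(t)\,dt=\frac{e^{\beta'}}{\cosh(\beta')}\int_0^\infty(\psi(x+1)-\log x)e^{-\pi x^2e^{4\beta'}}\,dx$, which does check out numerically. If you want a correct proof, prove that corrected statement; if you are asked to prove (1.7) verbatim, the honest answer is that it is not true.
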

\begin{proof} We apply the classical [15, eq.(2.16.2)]
$$\int_{0}^{\infty}\frac{\Xi(t)}{(t^2+\frac{1}{4})}\cos(xt)dt=\frac{\pi}{2}(e^{x/2}-2e^{-x/2}\sum_{n\ge1}e^{-\pi n^2e^{-2x}}),$$
with an integral from [9, pg.511, eq. ET I 31(16)], valid for $\pi\Re(\alpha)>|\Im(\bar{\alpha}\beta)|,$ and
$$\int_{0}^{\infty}\frac{\cos(ty)\cosh(t\alpha/2)dt}{\cosh(\alpha t)+\cosh(\beta)}=\frac{\pi}{2\alpha}\frac{\cos(\beta y/\alpha)}{\cosh(\beta/2)\cosh(\pi y/\alpha)}.$$
We need only let $\beta\rightarrow2\beta',$ and $\alpha=2,$ and then we have $\frac{\pi}{2}>|\Im(\beta')|.$ This coupled with Parseval's theorem gives the result. \end{proof}

The integral formula in Theorem 1.3 may be useful for fast numerical approximations, since the integrand of the integral on the left side is comprised entirely of exponentials. A possible avenue would be to truncate the series at the first term, since it is a double exponential. Note that the integral on the left side of (1.7) hints to us about the non-negativity of the integral on the right side. We will investigate this further in the next section.

\section{Inequalities}  We first start with some preliminaries about some special functions which can be found in [1]. First we recall the error function [9, pg.887],
$$\erf(x)=\frac{2}{\sqrt{\pi}}\int_{0}^{x}e^{-t^2}dt,$$
 and its complement, $\erfc(x)=1-\erf(x).$ Also, $\erfi(x):=-i\erf(ix).$ We also use the exponential integral [9, pg.883]
 $$\Ei(x)=-\int_{-x}^{\infty}\frac{e^{-t}}{t}dt. $$
\begin{theorem} Define the Hardy integral for $y>0$ as
$I(y):=\int_{0}^{\infty}(\psi(t+1)-\log(t))e^{-yt^2}dt.$ Then we have,
$$\frac{(2c_1)^2\sqrt{y}}{\sqrt{\pi}\erfi(\sqrt{y})}-\frac{\Ei(-y)}{4}-\frac{e^{-y}}{12}+\frac{\sqrt{y\pi}}{12}\erfc(\sqrt{y})\le I(y).$$
Furthermore,
$$I(y)\le \sqrt{c_2\sqrt{\frac{\pi}{2y}}\erf(\sqrt{2y})}-\frac{\Ei(-y)}{4},$$
where $c_1=0.952894,$ and $c_2=1.56624.$
\end{theorem}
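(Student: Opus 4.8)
The plan is to exploit both the positivity and the precise size of $f(t):=\psi(t+1)-\log t$ by splitting $I(y)=\int_0^1 f(t)e^{-yt^2}\,dt+\int_1^\infty f(t)e^{-yt^2}\,dt$ and handling the two ranges with different tools: sharp pointwise bounds on $[1,\infty)$, where $f$ is small and well approximated by its asymptotic head, and a Cauchy--Schwarz argument on $[0,1]$, where $f$ has only a mild logarithmic singularity. Throughout I would lean on the Binet-type representation $f(t)=\frac{1}{2t}-2\int_0^\infty \frac{u\,du}{(u^2+t^2)(e^{2\pi u}-1)}$, which makes both the positivity of $f$ and the required inequalities transparent.

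For the tail I would note that the integral in the representation is positive, giving $f(t)<\frac{1}{2t}$, while the elementary bound $\frac{1}{u^2+t^2}<\frac{1}{t^2}$ together with $2\int_0^\infty \frac{u\,du}{e^{2\pi u}-1}=\frac{1}{12}$ gives $f(t)>\frac{1}{2t}-\frac{1}{12t^2}$. Integrating these against $e^{-yt^2}$ over $[1,\infty)$ is then routine: the substitution $u=t^2$ yields $\int_1^\infty \frac{e^{-yt^2}}{2t}\,dt=-\tfrac14\Ei(-y)$, and one integration by parts gives $\int_1^\infty \frac{e^{-yt^2}}{t^2}\,dt=e^{-y}-\sqrt{\pi y}\,\erfc(\sqrt y)$. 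The upper pointwise bound thus contributes the common term $-\tfrac14\Ei(-y)$ to the upper estimate, and the lower pointwise bound contributes $-\tfrac14\Ei(-y)-\tfrac{1}{12}e^{-y}+\tfrac{1}{12}\sqrt{\pi y}\,\erfc(\sqrt y)$ to the lower estimate, the error-function factor arising from the $t^{-2}$ correction.

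On $[0,1]$ the singularity of $f$ at the origin is only logarithmic, so $\int_0^1\sqrt{f}\,dt$ and $\int_0^1 f^2\,dt$ both converge; these are exactly the numerical constants $c_1$ and $c_2$. For the upper bound I would apply Cauchy--Schwarz directly, $\int_0^1 f e^{-yt^2}\,dt\le\big(\int_0^1 f^2\big)^{1/2}\big(\int_0^1 e^{-2yt^2}\,dt\big)^{1/2}$, using $\int_0^1 e^{-2yt^2}\,dt=\frac{\sqrt\pi}{2\sqrt{2y}}\erf(\sqrt{2y})$. For the lower bound I would instead use Cauchy--Schwarz to bound the integral from below: writing $1=e^{-yt^2/2}\,e^{yt^2/2}$ gives $\big(\int_0^1\sqrt{f}\,dt\big)^2\le\big(\int_0^1 f e^{-yt^2}\,dt\big)\big(\int_0^1 e^{yt^2}\,dt\big)$, whence $\int_0^1 f e^{-yt^2}\,dt\ge \frac{c_1^2}{\int_0^1 e^{yt^2}\,dt}$, and then $\int_0^1 e^{yt^2}\,dt=\frac{\sqrt\pi}{2\sqrt y}\erfi(\sqrt y)$ supplies the leading term with $\erfi(\sqrt y)$ in the denominator.

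The main obstacle is the lower estimate on $[0,1]$: unlike a one-sided Cauchy--Schwarz, bounding $\int_0^1 f e^{-yt^2}\,dt$ from below requires the dual splitting above, and one must be sure that $f>0$ throughout $(0,1)$ (which the Binet representation guarantees) so that $\sqrt f$ is real and $c_1$ is well defined. The remainder is bookkeeping: assembling the $[0,1]$ and $[1,\infty)$ contributions and reconciling the constant factors, where I would verify the numerical values $c_1=\int_0^1\sqrt f\,dt$ and $c_2=\int_0^1 f^2\,dt$ against the stated $0.952894$ and $1.56624$ and carefully track the powers of $2$ entering through $\int_0^1 e^{\pm 2yt^2}\,dt$ versus $\int_0^1 e^{\pm yt^2}\,dt$.
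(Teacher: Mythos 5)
Your proposal is correct and follows essentially the same route as the paper: split $I(y)$ at $t=1$, control the tail with the pointwise bounds $\frac{1}{2t}-\frac{1}{12t^2}<\psi(t+1)-\log t<\frac{1}{2t}$, and treat $[0,1]$ by Cauchy--Schwarz in both directions with the same constants $c_1=\int_0^1\sqrt{\psi(t+1)-\log t}\,dt$ and $c_2=\int_0^1(\psi(t+1)-\log t)^2dt$; the only difference is that you derive the digamma inequality from the Binet-type representation, whereas the paper simply cites it from [13, Corollary 1]. One remark worth keeping: your bookkeeping correctly produces the tail term $\frac{\sqrt{\pi y}}{12}\erfc(\sqrt{y})$ and the leading term $\frac{2c_1^2\sqrt{y}}{\sqrt{\pi}\,\erfi(\sqrt{y})}$, and this is also all that the paper's own proof yields, so the $\erfi$ appearing in the last term of the theorem's displayed lower bound (which would blow up as $y\to\infty$, contradicting $I(y)\to0$) and the factor $(2c_1)^2$ there appear to be misprints rather than defects in your argument.
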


\begin{proof} We first need a result from the paper [13, Corollary 1], for $x>0,$
\begin{equation}\frac{1}{2x}-\frac{1}{12x^2}<\psi(x+1)-\log(x)<\frac{1}{2x}.\end{equation}
For the upper bound we first write,
$$I(y)=\int_{0}^{1}(\psi(t+1)-\log(t))e^{-yt^2}dt+\int_{1}^{\infty}(\psi(t+1)-\log(t))e^{-yt^2}dt.$$
Applying (2.1) we have that this is
$$\le \int_{0}^{1}(\psi(t+1)-\log(t))e^{-yt^2}dt-\frac{\Ei(-y)}{4}.$$
By Schwarz's inequality and the fact that 
$$\int_{0}^{1}(\psi(t+1)-\log(t))^2dt=1.56624$$ we have the upper bound. For the lower bound, we write the Schwarz's inequality in the form
$$\left(\int_{0}^{1}(\psi(t+1)-\log(t))^{1/2}dt\right)^2\le\left(\int_{0}^{1}(\psi(t+1)-\log(t))e^{-yt^2}dt\right)\left(\int_{0}^{1}e^{yt^2}dt\right),$$
and also note,
$$\int_{0}^{1}\sqrt{(\psi(t+1)-\log(t))}dt=0.952894.$$ This calculation, coupled with similar computations derived from the left side of (2.1) and the definitions of the special integral functions, gives our result.
\end{proof}

Now it can be seen that the lower bound actually implies the Hardy integral $I(x)$ is non-negative for $x>0,$ and also the squeeze theorem shows that $I(x)\rightarrow0$ when $x\rightarrow\infty.$ 

\section{Other observations and further comments}
While studies such as [6] show there is a plethora of integral formulae like Hardy's (1.1) to be found, it would be interesting to see if these integrals could be incorporated into similar work like Csordas [2]. We were interested in seeing if (1.1) could be adapted to imply information beyond Hardy's theorem, that $\Xi(t)$ has infinitely many real zeros. The classical ideas are included in Titchmarsh's text [15, pg. 256-260], and are based around the idea of first assuming $\Xi(t)$ is of one sign, and then arriving at a contradiction from properties of an integral of the form $\int_{0}^{\infty}\Xi(t)k(t)dt,$ $k(t)$ non-negative for $t>0.$ The idea in [15, pg. 260] considers the moments $k(t)=t^{2n}.$ Apply the operator $\bar{\partial}_x=-\partial_x^2+\frac{1}{4}$ of Glasser [8] to (1.1) (after replacing $t$ by $2t$) and set $x=x'+i\pi$ to get
$$\lim_{x\rightarrow x'+i\pi}\bar{\partial}_x(e^{x/2}I(\pi e^{2x}))=\int_{0}^{\infty}\Xi(t)\cos(x't)dt+i\int_{0}^{\infty}\Xi(t)\frac{\sin(x't)\sinh(\pi t)}{\cosh(\pi t)}dt.$$
Differentiating $2n$ times and setting $x'=0$ gives us the right side as the value $(-1)^n\int_{0}^{\infty}t^{2n}\Xi(t)dt.$ Note that the integral $I(x)$ clearly converges only for $\Re(x)>0,$ and hence $I(\pi e^{2z})$ converges when $-\frac{\pi}{4}<\Im(z)<\frac{\pi}{4}.$ It would be interesting if new information could be obtained about $\Xi(t)$ from alternative forms of these moments like those found here.

1390 Bumps River Rd. \\*
Centerville, MA
02632 \\*
USA \\*
E-mail: alexpatk@hotmail.com
\end{document}